\newtheorem{theorem}{Theorem}
\newtheorem{lemma}[theorem]{Lemma}
\newtheorem{question}[theorem]{Question}
\theoremstyle{definition}
\newtheorem{defn}[theorem]{Definition}
\crefname{defn}{Definition}{Definitions}
\theoremstyle{remark}
\newtheorem*{remark}{Remark}
\newcommand{\on}{\operatorname}
\newcommand{\mb}{\mathbb}
\renewcommand{\cong}{\simeq}
\title{A note on $r$ hypersurfaces intersecting in $\mb{P}^r$}
\author{Dennis Tseng}
\date{\today}
\begin{document}

\maketitle

\vspace*{-0.3in}
\begin{abstract}
We consider the locus of $r$-tuples of homogeneous forms of some fixed degree whose common vanishing locus in $\mb{P}^r$ is positive dimensional. We show that any component of maximal dimension of that locus either consists of homogeneous forms all vanishing on some line or homogeneous forms where a proper subset fail to intersect properly. 
\end{abstract}

\section{Introduction} 
A general choice of $r$ hypersurfaces in $\mb{P}^r$ will intersect in finitely many points. Given a choice of degrees $d_1\leq \cdots\leq d_r$, we can consider the closed locus 
\begin{align*}
Z:=\{(F_1,\ldots,F_r)\mid \{F_1=\cdots=F_r=0\}\text{ is positive dimensional}\}    
\end{align*}
inside the space $\mb{A}^{\binom{r+d_1}{d_1}}\times\cdots\times\mb{A}^{\binom{r+d_r}{d_r}}$ of all $r$ tuples of homogeneous forms. 

Previously, the author showed that if $d_i\leq d_1+\binom{d_1}{2}(i-1)$ and $d_1\geq 2$, then the unique component of maximal dimension consists of homogeneous forms all vanishing on some line \cite[Theorem 1.3]{TCIH}. The method was then applied to get partial results on the Kontsevich space of rational curves on hypersurfaces of degree $n-1$ in $\mb{P}^n$ \cite{TRocky} and on the space of hypersurfaces with positive dimensional singular locus \cite[Theorem 1.6]{TCIH} extending work of Slavov \cite{K15}. 

The purpose of this note is to give the following result about $Z$ that holds for all choices of degrees. 

\begin{theorem}
\label{MT}
For all choices of degrees $d_1\leq \cdots \leq d_r$, the unique component of $Z$ of maximal dimension not contained in
\begin{align}
\label{dim2}
\{(F_1,\ldots,F_r)\mid \dim(\{F_1=\cdots=F_{r-1}=0\})\geq 2\} 
\end{align}
consists of homogeneous forms all vanishing on some line.
\end{theorem}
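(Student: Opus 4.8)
The plan is to carry out a dimension count in which the locus $B$ of tuples all vanishing on a common line turns out to be the largest component of $Z$ not contained in \eqref{dim2}, which I abbreviate to $W$. Define $B$ to be the closure in $Z$ of the set of $(F_1,\dots,F_r)$ for which there is a line $\ell$ with $F_i|_\ell=0$ for all $i$; fibering over the Grassmannian $\mathbb{G}(2,r+1)$ of lines with linear fibers shows $B$ is irreducible with
\[
\dim B=2(r-1)+\sum_{i=1}^r\left(\binom{r+d_i}{d_i}-d_i-1\right).
\]
First I would verify $B\not\subseteq W$: for $\ell$ a line and $F_1,\dots,F_{r-1}$ general forms vanishing on $\ell$, the linear systems $|\mc{I}_\ell(d_i)|$ are base point free away from $\ell$, so $V(F_1,\dots,F_{r-1})$ has dimension $1$; hence the general point of $B$ is not in $W$.

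Now let $V$ be an arbitrary component of $Z$ with $V\not\subseteq W$; the goal is $\dim V\le\dim B$ with equality only for $V=B$. At the general point $(F_1,\dots,F_r)$ of $V$, the scheme $X:=V(F_1,\dots,F_{r-1})$ has dimension $\le1$ (because $W$ is closed and $V\not\subseteq W$) and dimension $\ge1$ (because $V(F_1,\dots,F_r)$ is positive dimensional), so $X$ is pure of dimension $1$; being cut out by $r-1$ forms on $\P^r$ it is a complete intersection, in particular Cohen--Macaulay. Choose an integral curve $C\subseteq V(F_1,\dots,F_r)$, so that $F_i\in(\mc{I}_C)_{d_i}$ for every $i$. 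Spreading out, as $(F_1,\dots,F_r)$ ranges over a dense open of $V$ the curve $C$ varies in an irreducible family $\Sigma$ of integral curves with constant Hilbert function $h_C$, and
\begin{equation}\label{eq:count}
\dim V\ \le\ \dim\Sigma+\sum_{i=1}^r\dim(\mc{I}_C)_{d_i}\ =\ \dim\Sigma+\sum_{i=1}^r\left(\binom{r+d_i}{d_i}-h_C(d_i)\right).
\end{equation}

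It thus suffices to establish
\begin{equation}\label{eq:key}
\dim\Sigma\ \le\ 2(r-1)+\sum_{i=1}^r\bigl(h_C(d_i)-d_i-1\bigr),\qquad\text{equality only for }C\text{ a line.}
\end{equation}
Indeed, substituting \eqref{eq:key} into \eqref{eq:count} gives $\dim V\le\dim B$ with equality only if the general $C$ is a line; but then the general point of $V$ lies in $B$, so $V\subseteq B$, forcing $V=B$ as $\dim V=\dim B$ and $B$ is irreducible. The theorem then follows, since $B\not\subseteq W$ implies $B$ lies in a component of $Z$ not contained in $W$, which by the above must equal $B$, so $B$ is a component, of strictly larger dimension than every other component not contained in $W$, and its general member vanishes on a line. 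For \eqref{eq:key} one has the standard inequality $h_C(d)\ge d+1$, valid for any curve $C$ and any $d\ge0$ (a general-hyperplane-section argument), with equality for some $d\ge1$ precisely when $C$ is a line; so \eqref{eq:key} is an equality when $C$ is a line, since then $\Sigma$ is an open subset of $\mathbb{G}(2,r+1)$. For $C$ not a line I would bound $\dim\Sigma$ via the incidence variety
\[
\mc{I}=\bigl\{(C,F_1,\dots,F_{r-1})\ \big|\ C\in\Sigma,\ F_i\in(\mc{I}_C)_{d_i},\ \dim V(F_1,\dots,F_{r-1})=1\bigr\},
\]
which dominates $\Sigma$ with general fiber of dimension $\sum_{i<r}\bigl(\binom{r+d_i}{d_i}-h_C(d_i)\bigr)$ and maps to $\prod_{i<r}\BA^{\binom{r+d_i}{d_i}}$ with finite fibers, because a one-dimensional complete intersection contains only finitely many integral curves; this yields $\dim\Sigma\le\sum_{i<r}h_C(d_i)$.

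I expect the main difficulty to be that the finite-fiber bound $\dim\Sigma\le\sum_{i<r}h_C(d_i)$ just obtained is too weak to prove \eqref{eq:key} for curves lying on low-degree hypersurfaces — plane conics when the $d_i$ are large are the basic test case, where $h_C(d_i)$ is far below $\binom{r+d_i}{d_i}$ although $\Sigma$ is a small family. For those $C$ one must instead estimate $\dim\Sigma$ directly as the dimension of a family of curves of bounded degree in $\P^r$. I anticipate that \eqref{eq:key} is cleanest to prove by induction on $r$: pass from $C$ to a general hyperplane section — a zero-dimensional scheme in $\P^{r-1}$ that is again a component of a complete intersection — invoke the inductive estimate, and recombine using $h_C(d)-d-1\ge0$, the equality case forcing the section, hence $C$, to be linear. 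The case $r=1$ is immediate.
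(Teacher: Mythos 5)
Your reduction of the theorem to the inequality \eqref{eq:key} is sound as far as it goes, but \eqref{eq:key} carries essentially all of the content of the theorem, and you do not prove it. Unwinding your own finite-fiber bound $\dim\Sigma\le\sum_{i<r}h_C(d_i)$ against \eqref{eq:key}, what you would need is $h_C(d_r)\ge \sum_{i=1}^r d_i-r+2$; for $C$ nondegenerate in $\mb{P}^r$ this follows from $h_C(d)\ge rd+1$, but for a curve $C$ spanning only a $\P^b$ with $b<r$ (your plane-conic example, but also e.g.\ a line pair's worth of components, or any low-span curve when the $d_i$ are large) it fails outright, exactly as you suspect. The fallback you sketch --- induction on $r$ via a general hyperplane section of $C$ --- is not carried out and does not obviously close the gap: the inductive statement for $\P^{r-1}$ concerns curves, not the zero-dimensional section $C\cap H$, and $\dim\Sigma$ is not controlled by the family of hyperplane sections (the fibers of $C\mapsto C\cap H$ can be large). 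So the proof is incomplete at its crucial step, and the missing case is precisely the degenerate one.

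For contrast, the paper closes this gap by a different mechanism, imported from \cite[Lemma 4.2]{TCIH}. Rather than bounding the dimension of the family $\Sigma$ of curves, it bounds the codimension of the locus of \emph{forms} directly, stratified by the span $b$ of the integral curve: after restricting to a $b$-plane $\Lambda$ (costing $\dim\mb{G}(b,r)$), one tracks the indices $i_1<\cdots<i_{r-b+1}$ at which $F_{i_j}$ is forced to contain an entire \emph{nondegenerate} component of $\{F_1=\cdots=F_{i_j-1}=0\}\cap\Lambda$ --- there must be $r-b+1$ such indices, the last equal to $r$ because the tuple is excluded from \eqref{dim2} --- and each such containment imposes at least $h_{b,\cdot}(d_{i_j})$ conditions by the nondegenerate Hilbert function bound (\Cref{nondeg}). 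For $b<r$ this extracts conditions from several of the $F_i$, not just from the final containment $F_i\supseteq C$, which is what defeats the low-span curves; a short numerical estimate then shows every stratum with $b>1$ has strictly larger codimension than the line stratum. If you want to complete your argument along your own lines, you would need a bound on $\dim\Sigma$ for degenerate curves of roughly the strength of \Cref{technical}, and I do not see how to get it from the single incidence $F_i\in(\mc{I}_C)_{d_i}$ alone.
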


While the statement of \Cref{MT} is qualitative, the proof uses the same methods as \cite{TCIH} and involves some numerical bounds working out right. Finally, like in \cite{TCIH}, the proof also applies for the locus of $r+a-1$ homogeneous forms whose common vanishing locus is positive dimensional for $a\geq 1$, and we state this slightly more general form in \Cref{MT2}.

\section{Further questions}
Given \Cref{MT}, one might ask for the largest components of the locus \eqref{dim2}. More precisely, we can ask
\begin{question}
\label{QQ}
Fix $k\leq r$ and $d_1\leq \cdots \leq d_k$. Does a component of maximal dimension of
\begin{align}
\label{locus}
\{(F_1,\ldots,F_k)\mid \dim(\{F_1=\cdots=F_k=0\})\geq r-k+1\}\subset \prod_{i=1}^{k}\mb{A}^{\binom{r+d_i}{d_i}}
\end{align}
not contained in 
\begin{align*}
    \{(F_1,\ldots,F_k)\mid \dim(\{F_1=\cdots=F_{k-1}=0\})\geq r-k+2\}\times \mb{A}^{\binom{r+d_k}{d_k}}\subset \prod_{i=1}^{k}\mb{A}^{\binom{r+d_i}{d_i}}
\end{align*}
consist of homogeneous forms all vanishing on some dimension $r-k+1$ linear space?
\end{question}

\Cref{MT} says the answer to \Cref{QQ} is always yes when $k=r$. When $k<r$ this is no longer true. For example, fix $k=2$ and $(d_1,d_2)=(2,2)$. One can quickly verify that \eqref{locus} has two components for every $r\geq 2$: the locus where $F_1$ and $F_2$ both vanish on some hyperplane and the locus where $F_1$ and $F_2$ both vanish on some quadric hypersurface. 

Also, by setting up an incidence correspondence, we check the two components have codimensions $r^2$ and $\frac{r^2+3r}{2}$ respectively in $\mb{A}^{\binom{r+2}{2}}\times \mb{A}^{\binom{r+2}{2}}$.
\begin{center}
    \begin{tabular}{c|c|c|c}
     & $r=2$ & $r=3$ & $r=4$\\\hline
    $\{F_1=F_2=0\}$ contains hyperplane & 4 & 9 & 16\\\hline
    $\{F_1=F_2=0\}$ contains quadric & 5 & 9 & 14 
    \end{tabular}
\end{center}
In particular, we see in this case for $r=2$ the the answer to \Cref{QQ} is yes as predicted by \Cref{MT}, for $r=3$ the answer is still yes but the largest component is no longer unique, and for $r\geq 4$ the answer is no. Given this example, one can also ask the following preliminary question where we set all the $d_i$'s to be equal. 
\begin{question}
\label{QQ2}
Fix $k\leq r$ and a degree $d$. Is it true that a component of maximal dimension of
\begin{align*}
    \{(F_1,\ldots,F_k)\mid \dim(\{F_1=\cdots=F_k=0\})\geq r-k+1\}\subset (\mb{A}^{\binom{r+d}{d}})^k
\end{align*}
must consist either of homogeneous forms vanishing on some dimension $r-k+1$ linear space or homogeneous forms that are linearly dependent?
\end{question}
If we fix $k\leq r$ and let $d>>0$, the author can show that there is a unique largest component and the first possibility occurs as a special case of a more general problem \cite{TVB}. If $k=r$, then again the locus of homogeneous forms vanishing on some line is the unique component of maximal dimension either by \cite[Theorem 1.3]{TCIH} or by applying \Cref{MT} and permuting the hypersurfaces.

\subsection{Acknowledgements}
The author would like to thank his advisor Joe Harris for helpful conversations. 

\section{Definitions}
We will follow the notation in \cite{TCIH}, since we will rely fundamentally on its main argument. As a trade off, the notation will be more cumbersome. Finally, the reader is referred to \cite[Section 2]{TCIH} for a worked example of the key argument, without the notational baggage. We will work over an algebraically closed field of arbitrary characteristic. 
\begin{defn}[{\cite[Definition 3.7]{TCIH}}]
Let $W_{r,d}\cong \mb{A}^{\binom{r+d}{d}}$ be the affine space whose underlying vector space is $H^0(\mb{P}^r,\mathscr{O}_{\mb{P}^r}(d))$. 
\end{defn}

\begin{defn}[{\cite[Definition 3.9]{TCIH}}]
\label{incidencenotation}
Given a tuple $(d_1,\ldots,d_k)$ of positive integers, $a$ a positive integer, and a subscheme $X\subset \mb{P}^r$, define
\begin{align*}
    \Phi^{\mb{P}^r,a}_{d_1,\ldots,d_k}(X)\subset \prod_{i=1}^{k}W_{r,d_i}
\end{align*}
to be the locus of tuples $(F_1,\ldots,F_k)$ of homogeneous forms of degrees $(d_1,\ldots,d_k)$ such that the vanishing locus $\{F_1=\cdots=F_k=0\}\cap X$ has dimension at least $\dim(X)-k+a$. 
\end{defn}

\subsection{Definitions used in proof}
For the proof of \Cref{MT2}, we will also need the following definitions
\begin{defn}[{\cite[Definitions 3.14 and 4.1]{TCIH}}]
Given $b\leq r$, let
\begin{align*}
\Phi^{\mb{P}^r,a}_{d_1,\ldots,d_k}(X,\on{Span}(r,b))\subset \Phi^{\mb{P}^r,a}_{d_1,\ldots,d_k}(\mb{P}^r)
\end{align*}
be the locus of forms $(F_1,\ldots,F_k)$ such that $\{F_1=\cdots=F_k=0\}\cap X$ contains an integral subscheme of dimension $\dim(X)-k+a$ with span exactly a dimension $b$ plane. 
\end{defn}
The locus $\Phi^{\mb{P}^r,a}_{d_1,\ldots,d_k}(X,\on{Span}(r,b))$ is a constructible subset given the discussion around \cite[Definitions 3.14 and 4.1]{TCIH}. 

\begin{defn}[{\cite[Definition 4.1]{TCIH}}]
Define $h_{r,a}(d):=(r-a)\binom{d+a-1}{d-1}+\binom{d+a}{d}$.
\end{defn}

The following elementary lemma can be found in \cite[Theorem 1.3]{P15}, for example, and it is proven by taking hyperplane slices and applying \cite[Lemma 3.1]{H82}. 
\begin{lemma}
\label{nondeg}
If $Z\subset \mb{P}^r$ is a nondegenerate integral scheme of dimension $a$, then the Hilbert function $h_Z(d)$ of $Z$ is bounded below by $h_{r,a}(d)$. 
\end{lemma}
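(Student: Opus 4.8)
The plan is to argue by induction on $a=\dim Z$, cutting by a general hyperplane. Write $S=k[x_0,\dots,x_r]$, let $I\subset S$ be the homogeneous ideal of $Z$ (prime, since $Z$ is integral), and put $R=S/I$, so $h_Z(d)=\dim_k R_d$. Because $R$ is a domain, every nonzero linear form is a nonzerodivisor on $R$; choosing $\ell$ so that in addition $H:=\{\ell=0\}\cong\mb{P}^{r-1}$ is a general hyperplane, the exact sequence $0\to R(-1)\xrightarrow{\,\ell\,}R\to R/\ell R\to 0$ gives $h_Z(d)-h_Z(d-1)=\dim_k(R/\ell R)_d$. Setting $Z'=(Z\cap H)_{\mathrm{red}}\subset H$ and noting that $R/\ell R=S/(I+(\ell))$ surjects onto the homogeneous coordinate ring $S/\sqrt{I+(\ell)}$ of $Z'$, we obtain the basic inequality $h_Z(d)-h_Z(d-1)\ge h_{Z'}(d)$ for every $d$.

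The geometric input I would use is that for general $H$ the reduced scheme $Z'$ is again integral and nondegenerate in $\mb{P}^{r-1}$, of dimension $a-1$: irreducibility of a general hyperplane section of an irreducible variety of dimension $\ge 2$ is Bertini's theorem (valid in all characteristics), and nondegeneracy of the general section is exactly \cite[Lemma 3.1]{H82}. Granting this, the inductive hypothesis yields $h_{Z'}(d)\ge h_{r-1,a-1}(d)$, and summing the basic inequality from $e=1$ to $d$ together with $h_Z(0)=1$ gives $h_Z(d)\ge 1+\sum_{e=1}^{d}h_{r-1,a-1}(e)$.

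It then remains to dispatch two elementary points. The base case is $a=1$: here $Z'$ is a finite reduced point set spanning $\mb{P}^{r-1}$, its coordinate ring is reduced, a linear form missing all the points is a nonzerodivisor on it, so $h_{Z'}$ is nondecreasing; since $h_{Z'}(1)=r$ by nondegeneracy, $h_{Z'}(d)\ge r=h_{r-1,0}(d)$ for $d\ge 1$, and the displayed sum reproduces $h_{r,1}(d)=rd+1$. The second point is the binomial bookkeeping for the inductive step: writing $h_{r-1,a-1}(e)=(r-a)\binom{e+a-2}{e-1}+\binom{e+a-1}{e}$ and applying the hockey-stick identity $\sum_{m=0}^{N}\binom{m+k}{k}=\binom{N+k+1}{k+1}$ to each summand shows $1+\sum_{e=1}^{d}h_{r-1,a-1}(e)=(r-a)\binom{d+a-1}{d-1}+\binom{d+a}{d}=h_{r,a}(d)$, which closes the induction.

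The only step that is not pure formalism is the persistence of nondegeneracy (and irreducibility) of a general hyperplane section, i.e.\ \cite[Lemma 3.1]{H82}; in positive characteristic one should note that the general section of $Z$ need not be reduced, which is why the whole argument is carried out with $(Z\cap H)_{\mathrm{red}}$ and why the cited statements are applied to it. Everything else — the Koszul-type short exact sequence for a nonzerodivisor, the surjection onto the reduced coordinate ring, and the hockey-stick identity — is routine, so I expect the cited hyperplane-section lemma to be the genuine content.
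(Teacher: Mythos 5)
Your argument is correct and is exactly the proof the paper points to: it cites the lemma as following "by taking hyperplane slices and applying \cite[Lemma 3.1]{H82}," which is precisely your induction via general hyperplane sections with the nondegeneracy of the section as the one nontrivial input. The surrounding bookkeeping (nonzerodivisor exact sequence, surjection onto the reduced coordinate ring, hockey-stick identity recovering $h_{r,a}$ from $1+\sum_{e=1}^d h_{r-1,a-1}(e)$) all checks out.
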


\section{Proof of Main Theorem}
\Cref{MT} follows from \Cref{MT2} when $a=1$. We will prove \Cref{MT2}. 
\begin{theorem}
\label{MT2}
If $a\geq 1$ and $1\leq d_1\leq \cdots \leq d_{r+a-1}$ are integers, the unique component of maximal dimension of 
\begin{align*}
    \Phi^{\mb{P}^r,a}_{d_1,\ldots,d_{r+a-1}}(\mb{P}^r)\backslash \left(\Phi^{\mb{P}^r,a}_{d_1,\ldots,d_{r+a-2}}(\mb{P}^r)\times W_{r,d_{r+a-1}}\right)
\end{align*}
consists of $r+a-1$ tuples $(F_1,\ldots,F_{r+a-1})$ of homogeneous forms of degree $(d_1,\ldots,d_{r+a-1})$ where $\{F_1=\cdots=F_{r+a-1}=0\}$ contains some line. 
\end{theorem}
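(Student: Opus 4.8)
The plan is to set up an incidence correspondence parametrizing pairs consisting of a tuple $(F_1,\ldots,F_{r+a-1})$ and an integral positive-dimensional component $Z$ of $\{F_1=\cdots=F_{r+a-1}=0\}$, stratified by the dimension $b$ of the linear span of $Z$. Since we have removed the locus where $(F_1,\ldots,F_{r+a-2})$ already fail to intersect in dimension $\geq a$, on the complement the component $Z$ we pick up must be genuinely cut down by all $r+a-1$ forms; in particular it is positive-dimensional of dimension exactly $a$ generically, and its span is a plane of some dimension $b$ with $1\leq b\leq r$. The case $b=1$ is exactly the locus of tuples all vanishing on a line, so it suffices to show that for every $b\geq 2$ the stratum $\Phi^{\mb{P}^r,a}_{d_1,\ldots,d_{r+a-1}}(\mb{P}^r,\on{Span}(r,b))$ has dimension strictly smaller than the $b=1$ stratum, and also that the $b=1$ stratum is irreducible of the expected dimension.

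The key computation is a fiber-dimension count over the choice of span. Fixing a $b$-plane $\Lambda\cong\mb{P}^b$ (a $(b+1)(r-b)$-dimensional choice), the forms $F_i$ that vanish on a fixed integral $Z\subset\Lambda$ of dimension $a$ must have restriction to $\Lambda$ lying in the degree-$d_i$ piece of the ideal of $Z$ in $\Lambda$; the codimension of this condition inside $W_{b,d_i}$ is the Hilbert function $h_Z(d_i)$, which by \Cref{nondeg} is at least $h_{b,a}(d_i)$ since $Z$ is nondegenerate in $\Lambda$. The forms are then free in the normal directions, i.e. one adds $\binom{r+d_i}{d_i}-\binom{b+d_i}{d_i}$ for the "off-$\Lambda$" coefficients. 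Assembling these over all $i$, plus the dimension of the relevant Hilbert scheme of such $Z$'s in $\mb{P}^b$, plus $(b+1)(r-b)$ for the choice of $\Lambda$, gives an upper bound $N(b)$ for $\dim\Phi^{\mb{P}^r,a}_{d_1,\ldots,d_{r+a-1}}(\mb{P}^r,\on{Span}(r,b))$. For $b=1$ one gets equality and a straightforward irreducibility statement (the Hilbert scheme of degree-$a$ rational normal-type subschemes of $\mb{P}^1$, together with the incidence data, is irreducible), so the remaining task is to prove the inequality $N(b)<N(1)$ for all $2\leq b\leq r$. This is precisely the "numerical bounds working out right" alluded to in the introduction, and it is where I expect the real work to be.

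For the numerical comparison, the main gain in passing from span $b$ to span $1$ comes from the Hilbert-function bound: replacing $h_{b,a}(d_i)$-codimension conditions by $h_{1,a}(d_i)=a\cdot d_i + 1 - \binom{a}{2}$-type conditions frees up many coefficients, and since $h_{r,a}(d)$ grows like a degree-$a$ polynomial in $d$ while the $b=1$ count grows linearly, the discrepancy $\sum_i (h_{b,a}(d_i)-h_{1,a}(d_i))$ dominates any loss coming from the term $(b+1)(r-b)$ (maximized near $b\approx r/2$) and from the larger dimension of the Hilbert scheme of higher-dimensional spans. One has to be a little careful that $Z$ need not itself be nondegenerate of the expected type inside $\Lambda$ in a way that would weaken the bound — but nondegeneracy in $\Lambda$ is forced by our definition of the $\on{Span}$ stratum, so \Cref{nondeg} applies cleanly. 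The delicate point is the edge cases: small $b$ (namely $b=2$), degrees with $d_1=1$ (where linear forms enter and several strata can have the same leading behavior), and checking that for $b=1$ no other component of $Z$ can sneak in with the same dimension; these I would handle by the same explicit estimates as in \cite[Section 4]{TCIH}, tracking the inequality term by term. The main obstacle, then, is not conceptual but is making the single chain of inequalities $N(b)<N(1)$ hold uniformly in $b$ and in the degree vector $(d_1,\ldots,d_{r+a-1})$, including the boundary degrees where the bound of \Cref{nondeg} is closest to tight.
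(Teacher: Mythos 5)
There is a genuine gap, and it sits exactly where you say you "expect the real work to be." Your plan fibers an incidence correspondence over the family of all integral curves $Z$ spanning a $b$-plane and adds "the dimension of the relevant Hilbert scheme of such $Z$'s" to your count $N(b)$. But that family is unbounded: as $\deg Z$ grows, both the number of conditions $\sum_i h_Z(d_i)$ and the dimension of the component of the Hilbert scheme containing $Z$ grow without bound, and the theorem reduces to a uniform comparison between these two quantities over all integral curves in $\mb{P}^r$ --- which is the hard content of the problem, not a bookkeeping term. The lower bound $h_Z(d_i)\geq h_{b,1}(d_i)$ from \Cref{nondeg} is uniform in $Z$, but nothing in your proposal bounds $\dim\on{Hilb}$ against it, so $N(b)$ is not a finite, computable quantity as written. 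The paper avoids parametrizing $Z$ altogether: in \Cref{technical} (following \cite[Lemma 4.2]{TCIH}) one adds the forms one at a time and argues that at $r-b+a$ steps $i_1<\cdots<i_{r-b+a}$ the form $F_{i_j}$ must contain a nondegenerate component of $\{F_1=\cdots=F_{i_j-1}=0\}$; that component is determined by the earlier forms, so it contributes no moduli, and the codimension bound is just $-\dim(\mb{G}(b,r))+\sum_j h_{b,b-i_j+j}(d_{i_j})$. A second, related gap: the removed locus $\Phi^{\mb{P}^r,a}_{d_1,\ldots,d_{r+a-2}}(\mb{P}^r)\times W_{r,d_{r+a-1}}$ enters your argument only as a remark ("$Z$ must be genuinely cut down by all $r+a-1$ forms") and never appears in your estimate. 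In the paper it is essential: it forces $i_{r-b+a}=r+a-1$ in the minimum of \Cref{technical}, i.e.\ it guarantees the largest degree $d_{r+a-1}$ contributes a full $h_{b,1}(d_{r+a-1})=bd_{r+a-1}+1$ conditions, and without this the statement fails for general degree vectors (which is precisely why \cite[Theorem 1.3]{TCIH} needed degree hypotheses). Your count, being symmetric in the $d_i$, cannot see this, and on the removed locus the family of curves $Z$ is genuinely too large for any fixed-$Z$ condition count to succeed.

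Two smaller corrections. The integral subscheme produced by the $\on{Span}(r,b)$ stratum has dimension $\dim(\mb{P}^r)-(r+a-1)+a=1$; it is a curve, not an $a$-dimensional variety, so the relevant bound is $h_{b,1}(d)=bd+1$ rather than $h_{b,a}(d)$, and the comparison with the $b=1$ stratum is linear-vs-linear with slope $b$ versus $1$ (the paper's chain of inequalities shows the gain $a(b-1)+\sum_j (b-1)d_j$ beats the loss $\dim\mb{G}(b,r)-\dim\mb{G}(1,r)$), not polynomial-of-degree-$a$ versus linear. Your formula $h_{1,a}(d)=ad+1-\binom{a}{2}$ also does not match the paper's definition of $h_{r,a}$.
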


The key input from \cite{TCIH} is \Cref{technical}.
\begin{lemma}
\label{technical}
We have the codimension of 
\begin{align}
\label{tlb}
    \Phi^{\mb{P}^r,a}_{d_1,\ldots,d_k}(\mb{P}^r,\on{Span}(r,b))\backslash \left(\Phi^{\mb{P}^r,a}_{d_1,\ldots,d_{k-1}}(\mb{P}^r)\times W_{r,d_k}\right)
\end{align} 
in $\prod_{i=1}^{k}{W_{r,d_i}}$ is at least
\begin{align*}
    -\dim(\mb{G}(b,r))+ \min\left\{\sum_{j=1}^{r-b+a}h_{b,b-i_j+j}(d_{i_j})\mid 1\leq i_1<\cdots<i_{r-b+a}= k\right\}.
\end{align*}
\end{lemma}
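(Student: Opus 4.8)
The plan is to set up an incidence correspondence over the Grassmannian $\mathbb{G}(b,r)$ of $b$-planes in $\mathbb{P}^r$. Let me think about what the locus in \eqref{tlb} parametrizes: tuples $(F_1,\ldots,F_k)$ such that $\{F_1=\cdots=F_k=0\}$ contains an integral subscheme of dimension $r-k+a$ spanning exactly a $b$-plane, but where $\{F_1=\cdots=F_{k-1}=0\}$ already has the "right" (i.e.\ small) dimension $r-k+1$, so that imposing $F_k$ is genuinely cutting down. Each point of \eqref{tlb} thus comes with at least one $b$-plane $\Lambda$ that contains such an integral subscheme $Y\subseteq\{F_1=\cdots=F_k=0\}$, and $Y$ is nondegenerate in $\Lambda\cong\mathbb{P}^b$ of dimension $\dim Y = r-k+a = b - (k-(r-b+a))$. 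So define $\widetilde{Z}\subseteq \mathbb{G}(b,r)\times\prod_i W_{r,d_i}$ to be the closure of the locus of pairs $(\Lambda,(F_1,\ldots,F_k))$ where such a nondegenerate integral $Y\subseteq\Lambda$ exists inside the common zero locus. The projection to $\prod_i W_{r,d_i}$ dominates (a dense subset of each component of) \eqref{tlb}, so it suffices to bound $\operatorname{codim}\widetilde{Z}$ in $\mathbb{G}(b,r)\times\prod_i W_{r,d_i}$; the term $-\dim(\mathbb{G}(b,r))$ accounts for passing from $\widetilde Z$ down to its image.

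First I would fix $\Lambda$ and analyze the fiber. Restricting $F_i$ to $\Lambda$ gives a surjection $W_{r,d_i}\twoheadrightarrow W_{b,d_i}=H^0(\mathbb{P}^b,\mathcal{O}(d_i))$, so it is enough to bound, inside $\prod_i W_{b,d_i}$, the codimension of the locus of $(f_1,\ldots,f_k)$ (the restrictions) whose common zero locus in $\mathbb{P}^b$ contains a nondegenerate integral subscheme of dimension $b-k+a$, subject to the "proper intersection so far" condition that lets us apply the estimate step by step. This is where \Cref{nondeg} enters: if $Y\subseteq\mathbb{P}^b$ is nondegenerate, integral, of dimension $a'$, then $h_Y(d)\geq h_{b,a'}(d)$, and the number of conditions for a degree-$d$ form to vanish on $Y$ is exactly $h_Y(d)$. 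The key bookkeeping is that, after choosing which of the $k$ forms are the ones that "drop the dimension" — indexed by $1\le i_1<\cdots<i_{r-b+a}=k$, with the last one forced to be $k$ by the set-difference condition in \eqref{tlb} — the $j$-th such form $f_{i_j}$ must vanish on an intermediate nondegenerate integral subscheme of $\mathbb{P}^b$ of dimension $b - i_j + j$ (this is exactly the dimension count one gets tracking how dimension drops as the earlier, non-essential forms are imposed), contributing at least $h_{b,\,b-i_j+j}(d_{i_j})$ independent linear conditions. Summing over $j$ and minimizing over the choice of indices gives the stated bound on the fiber codimension; adding back $\dim\mathbb{G}(b,r)$ for the base and subtracting it again for the projection yields the lemma.

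The main obstacle, and the place where one genuinely has to invoke the machinery of \cite{TCIH} rather than a soft argument, is justifying that the conditions coming from the successive nondegenerate subschemes are \emph{independent} across the different forms $f_{i_j}$ — i.e.\ that one really gets to add the $h_{b,b-i_j+j}(d_{i_j})$ together — and that the intermediate subschemes have the claimed dimensions $b-i_j+j$ on a dense subset of each component. This requires the stratification of the incidence variety by the flag of spans/vanishing loci and the semicontinuity arguments set up around \cite[Definitions 3.14 and 4.1]{TCIH}; in particular one must handle the non-integral or degenerate intermediate loci by the same dévissage, reducing to smaller $b$ where the estimate is already known, and one must check the "$\times W_{r,d_k}$" removal is exactly what forces $i_{r-b+a}=k$ and prevents the degenerate stratum from dominating. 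Modulo citing that framework, the remaining steps are the dimension bookkeeping above and the minimization over index sets, both of which are routine.
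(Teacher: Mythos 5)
Your proposal follows essentially the same route as the paper's own proof: reduce to the fiber over a fixed $\Lambda\in\mb{G}(b,r)$ via the incidence correspondence (accounting for the $-\dim(\mb{G}(b,r))$ term), then bound the fiber codimension by identifying the $r-b+a$ forms that must vanish on successive nondegenerate integral subschemes of dimensions $b-i_j+j$ and applying the Hilbert function bound $h_{b,b-i_j+j}(d_{i_j})$ from \Cref{nondeg}, with the removal of $\Phi^{\mb{P}^r,a}_{d_1,\ldots,d_{k-1}}(\mb{P}^r)\times W_{r,d_k}$ forcing $i_{r-b+a}=k$. You also correctly identify the same gap the paper itself acknowledges (the independence of the imposed conditions across the $i_j$), deferring it to the rigorous treatment in \cite{TCIH}, exactly as the paper does.
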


\begin{proof}[Proof of \Cref{technical}]
This follows from the proof of \cite[Lemma 4.2]{TCIH}. Since there are no new ideas not contained in \cite{TCIH}, we will give an informal proof, recapping the key idea in \cite{TCIH} and mentioning the small modification necessary to show \Cref{technical}. 

First, we can reduce to the case $b=r$ by a standard incidence correspondence. The locus
\begin{align}
\label{incidenceequation}
    \left\{\Lambda,(F_1,\ldots,F_k)\mid (F_1|_{\Lambda},\ldots,F_k|_{\Lambda})\in     \Phi^{\mb{P}^r,a+r-b}_{d_1,\ldots,d_k}(\Lambda,\on{Span}(r,b))\backslash \left(\Phi^{\mb{P}^r,a+r-b}_{d_1,\ldots,d_{k-1}}(\Lambda)\times W_{r,d_k}\right)\right\}
\end{align}
inside of $\mb{G}(b,r)\times \prod_{i=1}^{k}{W_{r,d_i}}$ surjects onto \eqref{tlb} when forgetting the $\mb{G}(b,r)$ factor. When we forget the $\prod_{i=1}^{k}{W_{r,d_i}}$ factor, the fibers of the projection of \eqref{incidenceequation} to $\mb{G}(b,r)$ are all isomorphic and all have codimension in $\prod_{i=1}^{k}{W_{r,d_i}}$ equal to the codimension of 
\begin{align}
\label{ie2}
        \Phi^{\mb{P}^b,a+r-b}_{d_1,\ldots,d_k}(\mb{P}^b,\on{Span}(b,b))\backslash \left(\Phi^{\mb{P}^b,a+r-b}_{d_1,\ldots,d_{k-1}}(\mb{P}^b)\times W_{b,d_k}\right)
\end{align}
in $\prod_{i=1}^{k}{W_{b,d_i}}$. Therefore, the codimension of \eqref{tlb} is the codimension of \eqref{ie2} minus the dimension of $\mb{G}(b,r)$. 

To compute the codimension of \eqref{ie2} in $\prod_{i=1}^{k}{W_{b,d_i}}$, it suffices to prove the statement of \Cref{technical} when $b=r$, so we want to bound the codimension of 
\begin{align}
\label{tlb2}
    \Phi^{\mb{P}^r,a}_{d_1,\ldots,d_k}(\mb{P}^r,\on{Span}(r,r))\backslash \left(\Phi^{\mb{P}^r,a}_{d_1,\ldots,d_{k-1}}(\mb{P}^r)\times W_{r,d_k}\right)
\end{align} 
in $\prod_{i=1}^{k}{W_{r,d_i}}$ from below. To do this, suppose $(F_1,\ldots,F_k)$ is in \eqref{tlb2}. Then, there exist $a$ values (possibly more) $1\leq i_1<\cdots<i_a=k$ where for $i=i_j$, 
$F_{i_j}$ must vanish on one of the nondegenerate components $Y$ of $\{F_1=\cdots=F_{i_j-1}=0\}$. Since that component $Y$ is dimension at least $r-i_j+j$, it is at least $h_{r,r-i_j+j}(d_{i_j})$ conditions for $F_{i_j}$ to vanish on $Y$. Taking this over all possibilities for $1\leq i_1<\cdots<i_a=k$ yields the bound
\begin{align}
\label{ndb}
    \min\left\{\sum_{j=1}^{a}h_{r,r-i_j+j}(d_{i_j})\mid 1\leq i_1<\cdots<i_{a}= k\right\},
\end{align}
which is what we wanted. 
\end{proof}

\begin{remark}
The part of the proof of \Cref{technical} that can be made more precise is the very end, where we argue there are $a$ instances where the next hypersurface must contain a nondegenerate component of the intersection of the previous hypersurfaces and thus obtain a bound on the codimension of \eqref{tlb2}. The arguments in \cite{TCIH} make this part rigorous, but we choose not to repeat the arguments here for the sake of clarity and length.

If the reader wants a more detailed proof of the bound \eqref{ndb}, see \cite[Lemma 4.2]{TCIH} for the lower bound for the codimension of $\Phi^{\mb{P}^r,a}_{d_1,\ldots,d_k}(\mb{P}^r,\on{Span}(r,r))$ in $\prod_{i=1}^{k}{W_{r,d_i}}$ given as 
\begin{align*}
    \min\left\{\sum_{j=1}^{a}h_{r,r-i_j+j}(d_{i_j})\mid 1\leq i_1<\cdots<i_{a}\leq k\right\}.
\end{align*}
The only difference is we minimized over all $1\leq i_1<\cdots<i_a\leq k$ instead of over $1\leq i_1<\cdots<i_a=k$. See also \cite[Section 2]{TCIH} for an example of how the argument works without the heavy notation.
\end{remark}

\begin{proof}[Proof of \Cref{MT2}]
By setting up the usual incidence correspondence, 
\begin{center}
    \begin{tikzcd}
     &\{(F_1,\ldots,F_{r+a-1}),\ell\mid F_i\text{ restricts to 0 on the line }\ell\text{ for all }i\} \ar[dl] \ar[dr] &   \\
     \mb{G}(1,r) & & \prod_{i=1}^{r+a-1}W_{r,d_i}
    \end{tikzcd}
\end{center}
one sees that the locus of forms $(F_1,\ldots,F_{r+a-1})$ where $\{F_1=\cdots=F_{r+a-1}=0\}$ contains some line (e.g. $\Phi^{\mb{P}^r,a}_{d_1,\ldots,d_{r+a-1}}(\mb{P}^r,\on{Span}(r,1))$) is codimension 
\begin{align}\label{b1}
    -\dim(\mb{G}(1,r))+\sum_{i=1}^{r+a-1}{(d_i+1)}
\end{align}
in $\prod_{i=1}^{r+a-1}W_{r,d_i}$.
Now, we want to apply \Cref{technical} and show the codimension of 
\begin{align}\label{bound}
    \Phi^{\mb{P}^r,a}_{d_1,\ldots,d_{r+a-1}}(\mb{P}^r,\on{Span}(r,b))\backslash \left(\Phi^{\mb{P}^r,a}_{d_1,\ldots,d_{r+a-2}}(\mb{P}^r)\times W_{r,d_{r+a-1}}\right)
\end{align} 
is always greater than the codimension of $\Phi^{\mb{P}^r,a}_{d_1,\ldots,d_{r+a-1}}(\mb{P}^r,\on{Span}(r,1))$ if $b>1$. 

From \Cref{technical}
\begin{align}
\label{bt2}
       -\dim(\mb{G}(b,r))+\min\left\{\sum_{j=1}^{r-b+a}h_{b,b-i_j+j}(d_{i_j})\mid 1\leq i_1<\cdots<i_{r-b+a}= r+a-1\right\}
\end{align}
is a lower bound for the codimension of \eqref{bound}. We can bound \eqref{bt2} from below by
\begin{align}
\label{bt22}
        -\dim(\mb{G}(b,r))+\min\left\{\sum_{j=1}^{r-b+a}h_{b,1}(d_{i_j})\mid 1\leq i_1<\cdots<i_{r-b+a}= r+a-1\right\},
\end{align}
which is
\begin{align*}
    -\dim(\mb{G}(b,r))+\sum_{j=1}^{r-b+a-1}(bd_j+1)+(bd_{r+a-1}+1)&\geq \\
    -\dim(\mb{G}(b,r))+(-b+1) + \sum_{j=1}^{r+a-1}(d_j+1)+\sum_{j=1}^{r-b+a-1}((b-1)d_j)&\geq \\
    -(b+1)(r-b)+(-b+1)+(b-1)(r-b+a-1)+ \sum_{j=1}^{r+a-1}(d_j+1)&=\\
    -2(r-1)+a(b-1)+ \sum_{j=1}^{r+a-1}(d_j+1)&,
\end{align*}
which is always greater than \eqref{b1} since $b>1$.
\end{proof}

\bibliographystyle{alpha} \bibliography{references.bib}
\end{document}